\newtheorem{Def}{Definition}
\newtheorem{thm}{Theorem}
\newtheorem{lem}{Lemma}
\newtheorem{cor}{Corollary}
\newcommand{\tr}{\vartriangle}
\newcommand{\angl}{\measuredangle}
\begin{document}

\title{On Some Results related to Napoleon's Configurations}
\author{Nikolay Dimitrov} 
\address{Department of Mathematics and Statistics, McGill
University, 805 Sherbrooke W., Montreal, QC H3A 2K6, Canada}
\email{dimitrov@math.mcgill.ca}




\begin{abstract}
The goal of this paper is to give a purely geometric proof of a
theorem by Branko Gr\"unbaum concerning configuration of triangles
coming from the classical Napoleon's theorem in planar Euclidean
geometry.
\end{abstract}

\maketitle

\section{Introduction}
In this short article I discuss some results from planar Euclidean
geometry which have a close connection to Napoleon's Theorem. They
are summarized in Theorem \ref{Main}. The statements of Theorem
\ref{Main} appear in \cite{G} where the proofs are based on
coordinate descriptions and algebraic computations. The proofs
given in the current paper use only purely geometric methods with
hardly any algebraic computations and are entirely in the spirit
of synthetic Euclidean geometry. Both Theorem \ref{Main} and
Napoleon's Theorem (see Theorem \ref{Napoleon}) are stated as
simple geometric results so it is appropriate to try and give
arguments that remain in the same simple geometric domain.
Moreover, in this way one can get a better  feeling about the
geometry and the properties of Napoleon's configurations.

\begin{Def} \label{Configuration}
Let $\tr ABC$ be an arbitrary triangle. We say that the points
$A_1, B_1$ and $C_1$ form a non-overlapping Napoleon's
configuration for the triangle $\tr ABC$ if all three triangles
$\tr ABC_1,$ $\tr AB_1C$ and $\tr A_1BC$ are equilateral and no
one of them overlaps with $\tr ABC$ (see Figure \ref{Config}.)
Alternatively, we say that the points $A_1^{'}, B_1^{'}$ and
$C_1^{'}$ form an overlapping Napoleon's configuration for $\tr
ABC$ if all three triangles $\tr ABC_1^{'},$ $\tr AB_1^{'}C$ and
$\tr A_1^{'}BC$ are equilateral and all of them overlap with $\tr
ABC$.
\end{Def}

\begin{figure}
\includegraphics[width=120mm]{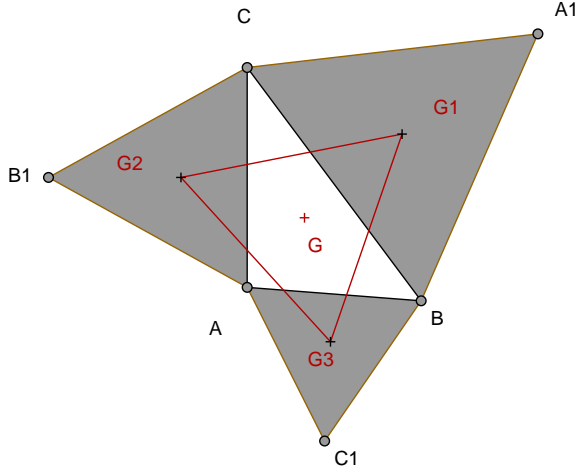}
\caption{A non-overlapping Napoleon's configuration and the first
part of Napoleon's Theorem.} \label{Config}
\end{figure}

\section{The Main Result}

The main result we are going to discuss is the following theorem:

\begin{thm} \label{Main}
Let us have an arbitrary triangle $\tr ABC$ and let $A_1, B_1$ and
$C_1$ form a non-overlapping Napoleon's configuration for that
triangle. Denote the midpoints of $B_1C_1, C_1A_1$ and $A_1B_1$ by
$A_2, B_2$ and $C_2$ respectively. Also, denote the centroids of
triangles $\tr A_1BC, \tr AB_1C$ and $\tr ABC_1$ by $G_1, G_2$ and
$G_3$ respectively. Then the following statements are true:
\vspace{2mm}

\noindent {\em 1.} The triangles $\tr A_2B_2C, \tr AB_2C_2$ and
$\tr A_2BC_2$ are equilateral; 
\vspace{2mm}

\noindent {\em 2.} The centroids $A^{*}, B^{*}, C^{*}$ of $\tr
AB_2C_2, \tr A_2B_2C, \tr A_2BC_2$ respectively are vertices of an
equilateral triangle, whose centroid coincides with the centroid
$G$ of $\tr ABC;$ \vspace{2mm}

Similarly, let $A_1^{'}, B_1^{'}$ and $C_1^{'}$ be an overlapping
Napoleon's configuration for $\tr ABC.$ Denote the midpoints of
$B_1^{'}C_1^{'}, C_1^{'}A_1^{'}$ and $A_1^{'}B_1^{'}$ by $A_2^{'},
B_2^{'}$ and $C_2^{'}$ respectively. Also, denote the centroids of
triangles $\tr A^{'}_1B^{'}C^{'}, \tr A^{'}B^{'}_1C^{'}$ and $\tr
A^{'}B^{'}C^{'}_1$ by $G^{'}_1, G^{'}_2$ and $G^{'}_3$
respectively.  Then \vspace{2mm}

\noindent {\em 3.} The triangles $\tr A_2^{'}B_2^{'}C, \tr
AB_2^{'}C_2^{'}$ and $\tr A_2^{'}BC_2^{'}$ are equilateral;
\vspace{2mm}

\noindent {\em 4.} The centroids $A^{**}, B^{**}, C^{**}$ of $\tr
AB_2^{'}C_2^{'}, \tr A_2^{'}BC_2^{'}, \tr A_2^{'}B_2^{'}C$
respectively are vertices of an equilateral triangle, whose
centroid coincides with the centroid $G$ of $\tr ABC;$
\vspace{2mm}

\noindent{\em 5.} Triangle $\tr A^{*}B^{*}C^{*}$ is homothetic to
 the triangle $\tr G^{'}_1G^{'}_2G^{'}_3$ with a homothetic center $G$
 and a coefficient of similarity $-1/2;$ \vspace{2mm}

\noindent{\em 6.} Triangle $\tr A^{**}B^{**}C^{**}$ is homothetic
to the triangle $\tr G_1G_2G_3$ with a homothetic center $G$ and a
coefficient of similarity $-1/2.$ \vspace{2mm}

 \noindent {\em 7.} The area of $\tr ABC$ equals
four times the algebraic sum of the areas of $\tr A^{*}B^{*}C^{*}$
and $\tr A^{**}B^{**}C^{**}.$
\end{thm}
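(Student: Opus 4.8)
The plan is to reduce statement 7 to the classical area relation between the two Napoleon triangles of $\tr ABC$ and then to prove that relation by an elementary distance computation; throughout, $[\,\cdot\,]$ denotes oriented area, normalised so that $[\tr ABC]>0$. First I would invoke statements 5 and 6. A homothety of the plane with real ratio $k$ preserves orientation and scales every oriented area by $k^{2}$, so the $(-1/2)$-homotheties about $G$ supplied by those statements give $4\,[\tr A^{*}B^{*}C^{*}]=[\tr G_1^{'}G_2^{'}G_3^{'}]$ and $4\,[\tr A^{**}B^{**}C^{**}]=[\tr G_1G_2G_3]$. Moreover, combining statements 2 and 5 (respectively 4 and 6), $\tr G_1^{'}G_2^{'}G_3^{'}$ and $\tr G_1G_2G_3$ are the $(-2)$-dilates about $G$ of the equilateral triangles $\tr A^{*}B^{*}C^{*}$ and $\tr A^{**}B^{**}C^{**}$, so they are themselves equilateral with centre $G$; the outer one, $\tr G_1G_2G_3$, is oriented like $\tr ABC$ while the inner one, $\tr G_1^{'}G_2^{'}G_3^{'}$, is oriented oppositely, which is precisely why statement 7 refers to the \emph{algebraic} sum. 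Everything therefore reduces to proving the identity $[\tr G_1G_2G_3]+[\tr G_1^{'}G_2^{'}G_3^{'}]=[\tr ABC]$.

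To prove this, note that $\tr A_1BC$ and $\tr A_1^{'}BC$ are the two equilateral triangles erected on the segment $BC$, hence reflections of one another in the line $BC$; thus their centroids $G_1$ and $G_1^{'}$ are symmetric in $BC$, both lying on the perpendicular to $BC$ through its midpoint $M_A$ at distance $|BC|/(2\sqrt3)$ from $M_A$ on opposite sides, with $G_1$ on the side away from $A$. Let $R=|GG_1|$ and $\rho=|GG_1^{'}|$ be the circumradii of the two concentric equilateral Napoleon triangles; then $[\tr G_1G_2G_3]=\tfrac{3\sqrt3}{4}R^{2}$ and $[\tr G_1^{'}G_2^{'}G_3^{'}]=-\tfrac{3\sqrt3}{4}\rho^{2}$, so it suffices to show $R^{2}-\rho^{2}=\tfrac{4}{3\sqrt3}[\tr ABC]$. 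Writing $G_1=M_A+\vec w$ and $G_1^{'}=M_A-\vec w$ with $\vec w=\overrightarrow{M_AG_1}$ (a quarter-turn of $\tfrac{1}{2\sqrt3}\overrightarrow{BC}$, pointing to the side of $BC$ away from $A$), we get $R^{2}-\rho^{2}=|\overrightarrow{GM_A}+\vec w|^{2}-|\overrightarrow{GM_A}-\vec w|^{2}=4\,\overrightarrow{GM_A}\cdot\vec w$, and $\overrightarrow{GM_A}\cdot\vec w=\tfrac{1}{2\sqrt3}\bigl(\overrightarrow{GM_A}\times\overrightarrow{BC}\bigr)=\tfrac{1}{\sqrt3}[\tr GBC]=\tfrac{1}{3\sqrt3}[\tr ABC]$, the last step because the centroid $G$ divides $\tr ABC$ into three triangles of equal area. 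Hence $R^{2}-\rho^{2}=\tfrac{4}{3\sqrt3}[\tr ABC]$, so $[\tr G_1G_2G_3]+[\tr G_1^{'}G_2^{'}G_3^{'}]=\tfrac{3\sqrt3}{4}(R^{2}-\rho^{2})=[\tr ABC]$ and, with the first step, $[\tr ABC]=4\,[\tr A^{*}B^{*}C^{*}]+4\,[\tr A^{**}B^{**}C^{**}]$, which is statement 7. (Carrying out the same computation with $CA$ or $AB$ in place of $BC$ yields the same value, because $[\tr GBC]=[\tr GCA]=[\tr GAB]$ --- a built-in consistency check.)

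The content is concentrated entirely in that middle identity; the homothety of statements 5--6 then does the rest. The point I expect to require the most care is the orientation bookkeeping: one must genuinely verify that $\tr G_1G_2G_3$ is positively oriented and $\tr G_1^{'}G_2^{'}G_3^{'}$ negatively oriented --- equivalently that $[\tr G_1^{'}G_2^{'}G_3^{'}]=\tfrac12[\tr ABC]-\tfrac{\sqrt3}{24}(a^{2}+b^{2}+c^{2})\le 0$, which is Weitzenb\"ock's inequality --- so that ``four times the algebraic sum of the areas'' means precisely $4\bigl(|[\tr A^{**}B^{**}C^{**}]|-|[\tr A^{*}B^{*}C^{*}]|\bigr)=[\tr ABC]$. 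A variant that avoids the orientation question altogether proves the middle identity with signed areas from the outset: writing $G_i=M_i+\tfrac12 N_i$ and $G_i^{'}=M_i-\tfrac12 N_i$, where $M_A,M_B,M_C$ are the midpoints of the sides of $\tr ABC$ and $N_i=\overrightarrow{G_i^{'}G_i}$, and expanding the oriented-area form bilinearly, one finds that all cross terms cancel in the sum, leaving $[\tr G_1G_2G_3]+[\tr G_1^{'}G_2^{'}G_3^{'}]=2[\tr M_AM_BM_C]+\tfrac12[\tr N_1N_2N_3]=\tfrac12[\tr ABC]+\tfrac12[\tr ABC]$, since $\tr M_AM_BM_C$ is the medial triangle and the vectors $N_1,N_2,N_3$ --- perpendicular to $BC,CA,AB$ with lengths $\tfrac{a}{\sqrt3},\tfrac{b}{\sqrt3},\tfrac{c}{\sqrt3}$ --- span a triangle of oriented area $[\tr ABC]$.
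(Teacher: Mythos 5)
Your proposal proves only statement 7, and it does so by taking statements 2, 4, 5 and 6 as given. Those are not minor preliminaries: they are the bulk of what the theorem asserts, and in the paper they absorb nearly all of the work --- a rotation-and-parallelogram argument showing $\tr A_2B_2C$ is equilateral (Lemma \ref{Lemma1}), the fact that $\tr ABC$, $\tr A_1B_1C_1$ and $\tr A_2B_2C_2$ share the centroid $G$ (Lemma \ref{CommonCentroid}), the observation that $A,B,C$ form an overlapping Napoleon configuration for $\tr A_2B_2C_2$ so that Napoleon's Theorem yields statement 2 (Corollary \ref{ProofConfig}), and the homothety sending $G_3'$ to $C^{*}$, which rests on $C_2$ being the midpoint of $CC_1'$ (Lemmas \ref{Midpoint} and \ref{Homothetic}). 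None of statements 1--6 receives a proof in your write-up, so as a proof of the theorem as stated there is a genuine gap: deducing part 7 from parts 5 and 6 is only legitimate once parts 5 and 6 have themselves been established.

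The part you do carry out is essentially sound, and its first half coincides with the paper's own argument: Corollary \ref{LastTwoStatements} likewise quarters the areas through the $(-1/2)$-homotheties and reduces statement 7 to the identity $Area(\tr G_1G_2G_3)+Area(\tr G_1'G_2'G_3')=Area(\tr ABC)$, which is part 3 of Theorem \ref{Napoleon}. Where you genuinely diverge is in proving that identity: your computation $R^{2}-\rho^{2}=4\,\overrightarrow{GM_A}\cdot\vec w=\tfrac{4}{3\sqrt3}\,Area(\tr ABC)$, using the symmetry of $G_1$ and $G_1'$ in the line $BC$, is correct and pleasantly short, and your bilinear-expansion variant correctly explains why the two signed areas add up exactly (the cross terms cancel between the $+\tfrac12$ and $-\tfrac12$ perturbations of the medial triangle). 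The paper instead argues synthetically, reflecting $C$, $A$ and $B$ in the three sides of $\tr G_1G_2G_3$ onto a common point $P$ (which turns out to be the Fermat point $J$) and comparing with the hexagon $AG_3BG_1CG_2$; your route trades that geometric insight for a dot-product calculation, which is efficient but runs against the paper's announced aim of avoiding coordinate and algebraic computations. You are also right that the orientation bookkeeping (the inner Napoleon triangle being oppositely oriented, equivalently Weitzenb\"ock's inequality) is the delicate point hidden in the phrase ``algebraic sum''; the paper glosses over it just as quickly.
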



\section{Napoleon's Theorem}

Napoleon's Theorem is a beautiful result from planar Euclidean
geometry. One can approach its proof in different ways. The proof
given here is a geometric one and in agreement with the
planimetric nature of the statement. Before we state and prove
Napoleon's Theorem we are going to need the following lemma:

\begin{lem} \label{BasicLemma}
Given an arbitrary $\tr ABC$, let $A_1, B_1$ and $C_1$ form a
non-overlapping Napoleon's configuration for that triangle. Then,
the following properties are true:
\smallskip

\noindent {\em 1.} The segments $AA_1, BB_1$ and $CC_1$ are of
equal length. In other words, $AA_1 = BB_1 = CC_1;$
\smallskip

\noindent {\em 2.} They intersect at a common point. Let us denote
it by $J;$
\smallskip

\noindent {\em 3.} $\angl AJB = \angl BJC = \angl CJA =
120^{\circ}.$
\smallskip

\noindent {\em 4.} The circles $K_1, K_2$ and $K_3$ circumscribed
around the equilateral triangles $\tr A_1BC, \tr AB_1C$ and $\tr
ABC_1$ respectively pass through the point $J$ (see Figure
\ref{FigLemma1}.)
\end{lem}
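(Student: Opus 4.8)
The plan is to prove part~1 by a rotation argument and parts 2--4 by the inscribed--angle theorem applied to the three circumscribed circles $K_1,K_2,K_3$.

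\emph{Part 1.} I would consider the rotation $\rho$ through $60^{\circ}$ about the vertex $C$. Since $\tr A_1BC$ and $\tr AB_1C$ are equilateral we have $CA_1=CB$, $CA=CB_1$ and $\angl A_1CB=\angl ACB_1=60^{\circ}$, and because the configuration is non-overlapping these two turns are performed in the same sense; hence one and the same $\rho$ sends $A_1\mapsto B$ and $A\mapsto B_1$. Then $\rho$ carries the segment $A_1A$ onto the segment $BB_1$, so $AA_1=BB_1$, and cyclically $AA_1=BB_1=CC_1$. As a by-product $\rho$ sends the line $AA_1$ to the line $BB_1$, so these two cevians meet at an angle of $60^{\circ}$, and similarly for the other two pairs.

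\emph{The point $J$, and parts 3 and 4.} Let $K_1,K_2,K_3$ be the circumcircles of $\tr A_1BC$, $\tr AB_1C$, $\tr ABC_1$. The circles $K_1$ and $K_3$ both pass through $B$ and are not tangent there: by the tangent--chord angle their tangents at $B$ make $60^{\circ}$ with $BC$ and with $BA$ respectively, hence are distinct. So $K_1$ and $K_3$ meet in a second point, which I call $J$. Since $\tr ABC_1$ is equilateral and inscribed in $K_3$, a point of $K_3$ lying on the arc $AB$ that does not contain $C_1$ sees $AB$ under $180^{\circ}-60^{\circ}=120^{\circ}$; the non-overlapping hypothesis puts $J$ on that arc, so $\angl AJB=120^{\circ}$, and the same reasoning in $K_1$ gives $\angl BJC=120^{\circ}$. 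As the rays $JA,JB,JC$ are arranged around $J$ so that the three angles they bound add up to $360^{\circ}$, we obtain $\angl CJA=360^{\circ}-240^{\circ}=120^{\circ}$, which is part~3; and then $\angl CJA=120^{\circ}=180^{\circ}-\angl CB_1A$ shows that $A,C,B_1,J$ are concyclic, i.e. $J\in K_2$, so $J$ lies on all three circles, which is part~4.

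\emph{Part 2.} It remains to check that $J$ lies on each of the segments $AA_1,BB_1,CC_1$. Working in $K_3$, the chord $BC_1$ subtends $\angl BAC_1=60^{\circ}$ at $A$, so $\angl BJC_1=60^{\circ}$; since $C$ and $C_1$ lie on opposite sides of the line $BJ$, the angles add and $\angl C_1JC=\angl C_1JB+\angl BJC=60^{\circ}+120^{\circ}=180^{\circ}$, so $C,J,C_1$ are collinear, that is $J\in CC_1$. Arguing cyclically in $K_1$ and $K_2$ gives $J\in AA_1$ and $J\in BB_1$, hence the three segments all pass through the common point $J$.

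\emph{Where the difficulty lies.} No individual step is computationally heavy; the real work is the configuration bookkeeping -- verifying that the two $60^{\circ}$ rotations about $C$ turn in the same sense, that $J$ actually lands on the prescribed arcs (equivalently, that $J$ is interior to the appropriate region), and that pairs such as $C,C_1$ sit on opposite sides of the line $BJ$. Each of these is precisely where the \emph{non-overlapping} assumption is used, so I would make them rigorous by fixing an orientation of the plane once and for all and, if necessary, treating separately the degenerate cases (for instance $\tr ABC$ equilateral, or having an angle close to $120^{\circ}$, where the point $J$ approaches a vertex of $\tr ABC$).
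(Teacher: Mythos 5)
Your proposal is correct in substance but organizes the argument differently from the paper, essentially running the chain of deductions in the opposite direction. The paper defines $J$ as the intersection of the two cevians $BB_1$ and $CC_1$: a $60^{\circ}$ rotation about $A$ carries $CC_1$ onto $B_1B$, which simultaneously gives $BB_1=CC_1$ and the $60^{\circ}$ angle at $J$; from this the cyclic quadrilaterals $CB_1AJ$, $BA_1CJ$ and $AC_1BJ$ are deduced one at a time, yielding $J\in K_2$, $J\in K_1$, the collinearity of $A,J,A_1$, and finally $J\in K_3$. You instead define $J$ as the second intersection point of $K_1$ and $K_3$ and work forward from the inscribed-angle theorem to the three $120^{\circ}$ angles, then to $J\in K_2$, and only at the end to the collinearities that place $J$ on the three segments --- the classical ``two circumcircles meet again at the Fermat point'' argument, as opposed to the paper's ``cevian intersection'' argument; both use the same toolkit (a $60^{\circ}$ rotation for the equal lengths, inscribed angles for the rest). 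One caveat: your claim that $K_1$ and $K_3$ cannot be tangent at $B$ because their tangents there make $60^{\circ}$ with $BC$ and with $BA$ respectively does not follow as stated, since those two tangent lines coincide precisely when $\angl ABC = 120^{\circ}$ --- exactly the degenerate case in which $J$ collapses onto the vertex $B$. That is also the case in which the paper's own proof silently breaks down (there $J=B$ and the quadrilateral $CB_1AJ$ degenerates), and you do explicitly flag the remaining configuration checks (which arc $J$ lies on, which side of the line $BJ$ the points $C$ and $C_1$ occupy) as the places where the non-overlapping hypothesis enters; the paper leaves the analogous checks implicit as well, so the two arguments are at a comparable level of rigor.
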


\begin{figure}
\centering%
\includegraphics[width=120mm]{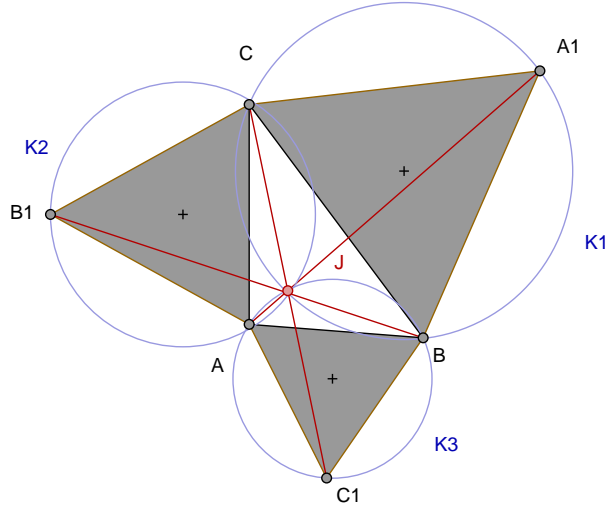}
\caption{Constructions in the Proof of Lemma \ref{BasicLemma}.}
\label{FigLemma1}
\end{figure}

\begin{proof}
Perform a $60^{\circ}$ rotation $R_A$ around the point $A$ in
counterclockwise direction. Since $AC = AB_1$ and $\angl CAB_1 =
60^{\circ},$ the point $C$ is mapped to the point $B_1.$
Similarly, $C_1$ is mapped to $B.$ Therefore the segment $CC_1$
maps to the segment $B_1B.$ This implies that $BB_1 = CC_1$ (see
Figure \ref{FigLemma1}.) Moreover, if we denote by $J$ the
intersection point of $BB_1$ and $CC_1,$ then $\angl CJB_1 = \angl
C_1JB = 60^{\circ}$ and $\angl BJC = 180^{\circ} - \angl CJB_1 =
180^{\circ} - 60^{\circ} = 120^{\circ}.$ We are going to show that
the points $A, J$ and $A_1$ lie on the same line.

Notice that $\angl CJB_1 = \angl CAB_1 = 60^{\circ}.$ Therefore
the quadrilateral $CB_1AJ$ is inscribed in a circle $K_2.$ Then,
$\angl CJA = 180^{\circ} - \angl AB_1C = 180^{\circ} - 60^{\circ}
= 120^{\circ}.$ Since $\angl BJC + \angl CA_1B = 120^{\circ} +
60^{\circ} = 180^{\circ},$ the points $B, A_1, C$ and $J$ lie on a
circle $K_1.$ From here we can conclude that $\angl A_1JC = \angl
A_1BC = 60^{\circ}.$ Then, $\angl A_1JA = \angl A_1JC + \angl CJA
= 60^{\circ} + 120^{\circ} = 180^{\circ}.$ That means that $J$
belongs to the straight line $AA_1.$

If we perform another $60^{\circ}$ counterclockwise rotation
$R_B,$ this time around the point $B,$ it will turn out that
$AA_1$ is mapped to $C_1C.$ Therefore, $AA_1 = CC_1.$ Also, $\angl
AJB = 360^{\circ} - \angl BJC - \angl CJA = 360^{\circ} -
120^{\circ} - 120^{\circ} = 120^{\circ}.$ Since $\angl AJB + \angl
BC_1A = 120^{\circ} + 60^{\circ} = 180^{\circ},$ the points $A,
C_1, B$ and $J$ lie on a circle $K_3.$ We see that the circles
$K_1, K_2, K_3$ all pass through the same point $J.$ This
completes the proof of Lemma \ref{BasicLemma}.
\end{proof}

Now we are ready to state and prove Napoleon's Theorem.

\begin{thm} \label{Napoleon} Let $\tr ABC$ be an arbitrary
triangle and let $G$ be its centroid. Then, the following
statements are true: \vspace{2mm}

\noindent {\em 1.} Assume $A_1, B_1$ and $C_1$ form a
non-overlapping Napoleon's configuration for that triangle. Denote
the centroids of triangles $\tr A_1BC, \tr AB_1C$ and $\tr ABC_1$
by $G_1, G_2$ and $G_3$ respectively. Then, the triangle $\tr
G_1G_2G_3$ is equilateral with a centroid coinciding with the
point $G;$
\vspace{2mm}

\noindent {\em 2.} Let $A^{'}_1, B^{'}_1$ and $C^{'}_1$ form an
overlapping Napoleon's configuration for that triangle. Denote the
centroids of triangles $\tr A^{'}_1B^{'}C^{'}, \tr
A^{'}B^{'}_1C^{'}$ and $\tr A^{'}B^{'}C^{'}_1$ by $G^{'}_1,
G^{'}_2$ and $G^{'}_3$ respectively. Then, the triangle $\tr
G^{'}_1G^{'}_2G^{'}_3$ is equilateral with a centroid coinciding
with the point $G;$ \vspace{2mm}

\noindent {\em 3.} The area of $\tr ABC$ equals the algebraic sum
of the areas of $\tr G_1G_2G_3$  and $\tr G^{'}_1G^{'}_2G^{'}_3.$
\end{thm}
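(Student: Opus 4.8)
The plan is to deduce all three statements from Lemma~\ref{BasicLemma} (and from its mirror-image version for the overlapping configuration, which is proved by the identical argument). The key remark is that for an equilateral triangle the centroid and the circumcenter coincide; hence $G_1,G_2,G_3$ are exactly the centers of the circles $K_1,K_2,K_3$ of Lemma~\ref{BasicLemma}, and likewise $G_1',G_2',G_3'$ are the centers of the corresponding three circles in the overlapping case, which by the same argument pass through a common point.

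For statement~1, recall from Lemma~\ref{BasicLemma} that $K_1,K_2,K_3$ all pass through $J$ and that each of them contains the two vertices of $\tr ABC$ lying on its defining side. Thus $K_2$ and $K_3$ meet at $A$ and $J$, the pair $K_3,K_1$ at $B$ and $J$, and $K_1,K_2$ at $C$ and $J$; consequently the line of centers of each pair is the perpendicular bisector of the corresponding common chord, so $G_2G_3\perp JA$, $G_3G_1\perp JB$ and $G_1G_2\perp JC$. Since $\angl AJB=\angl BJC=\angl CJA=120^{\circ}$, the interior angles of $\tr G_1G_2G_3$ are all $60^{\circ}$, i.e.\ $\tr G_1G_2G_3$ is equilateral. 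To locate its center I would compute $GG_1$ (and, cyclically, $GG_2,GG_3$). Let $M$ be the midpoint of $BC$; then $G$ lies on the median $AM$ with $GM=\tfrac13 AM$, while $G_1$ lies on the perpendicular to $BC$ at $M$, on the side opposite to $A$, with $MG_1=\tfrac{\sqrt3}{6}\,BC$. Applying the law of cosines in $\tr GMG_1$ and using that the length of the median from $A$ times the sine of the angle it makes with $BC$ equals the altitude from $A$, namely $2[\tr ABC]/BC$, one obtains $GG_1^{2}=\tfrac1{18}\bigl(AB^{2}+BC^{2}+CA^{2}\bigr)+\tfrac{2}{3\sqrt3}[\tr ABC]$. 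This expression is symmetric in the three sides, so $GG_1=GG_2=GG_3$; hence $G$ is the circumcenter, and therefore the centroid, of the equilateral triangle $\tr G_1G_2G_3$. Statement~2 follows in exactly the same way from the mirror version of Lemma~\ref{BasicLemma}: the only change is that $MG_1'$ now points toward $\tr ABC$, which reverses the sign of the cross term and gives $GG_1'^{2}=\tfrac1{18}\bigl(AB^{2}+BC^{2}+CA^{2}\bigr)-\tfrac{2}{3\sqrt3}[\tr ABC]$, again symmetric in the sides.

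For statement~3 I would use these two distance formulas together with the fact that an equilateral triangle of circumradius $R$ has area $\tfrac{3\sqrt3}{4}R^{2}$. This gives
\[
[\tr G_1G_2G_3]=\tfrac{\sqrt3}{24}\bigl(AB^{2}+BC^{2}+CA^{2}\bigr)+\tfrac12[\tr ABC],
\]
and the same expression with a minus sign in front of $\tfrac12[\tr ABC]$ for $[\tr G_1'G_2'G_3']$. The two Napoleon triangles have opposite orientations (one reads off from the positions of $G_1',G_2',G_3'$ relative to the sides of $\tr ABC$ that $\tr G_1'G_2'G_3'$ runs around the opposite way; it degenerates to the single point $G$ precisely when $\tr ABC$ is equilateral, since $GG_1'^{2}\ge0$ forces all three of $GG_1',GG_2',GG_3'$ to vanish together). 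Hence in the algebraic sum the area of $\tr G_1'G_2'G_3'$ is counted with the opposite sign, and the sum equals $[\tr G_1G_2G_3]-[\tr G_1'G_2'G_3']=[\tr ABC]$, as required.

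The step I expect to be the main obstacle is the centroid coincidence in statements~1 and~2. That $\tr G_1G_2G_3$ is equilateral drops out of the circle picture, but identifying its center with $G$ needs the explicit value of $GG_1$, and one must verify that the sign of the term $\pm\tfrac{2}{3\sqrt3}[\tr ABC]$ is the same for all three of $GG_1,GG_2,GG_3$ (namely $+$ in the non-overlapping case, $-$ in the overlapping one); this is exactly where the consistent orientation of the configuration is used. A shorter but less self-contained alternative is to note that the vectors joining the three side-midpoints of $\tr ABC$ to $G_1,G_2,G_3$ respectively are the images of the three edge vectors of $\tr ABC$ under one and the same rotation composed with one and the same scaling; since the edge vectors sum to the zero vector, so do these three, whence $G_1+G_2+G_3$ equals the sum of the side-midpoints, which is $A+B+C$. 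I would prefer the law-of-cosines route, since it simultaneously yields the area formulas needed for statement~3.
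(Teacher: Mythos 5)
Your argument is correct, but it takes a genuinely different route from the paper's. For statement 1 the paper also leans on Lemma \ref{BasicLemma}, but in a more economical way: since $M_2G : M_2B = M_2G_2 : M_2B_1 = 1:3$ for the midpoint $M_2$ of $AC$, the intercept theorem gives $GG_2 \parallel BB_1$ with $GG_2 = \tfrac13 BB_1$ (and cyclically), so parts 1 and 3 of the lemma transport directly to $G$: the segments $GG_1, GG_2, GG_3$ are equal and pairwise at $120^{\circ}$, which yields the equilateral shape \emph{and} the centroid coincidence in one stroke. Your radical-axis argument ($G_2G_3 \perp JA$, etc.) delivers only the $60^{\circ}$ angles, which is why you are forced into the law-of-cosines computation of $GG_1$ to locate the center; that computation is correct ($GG_1^{2} = \tfrac{1}{18}(a^{2}+b^{2}+c^{2}) + \tfrac{2}{3\sqrt3}\,Area(\tr ABC)$ checks out), and it buys you the explicit formula $Area(\tr G_1G_2G_3) = \tfrac{\sqrt3}{24}(a^{2}+b^{2}+c^{2}) + \tfrac12 Area(\tr ABC)$, from which statement 3 is immediate. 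The paper instead proves statement 3 synthetically: it reflects $C$, $A$, $B$ across $G_1G_2$, $G_2G_3$, $G_3G_1$ onto a single point $P$ (which turns out to be $J$), concluding $Area(\tr G_1G_2G_3) = \tfrac12 Area(AG_3BG_1CG_2)$ and hence $\tfrac12 Area(\tr ABC) + \tfrac16\sum Area$ of the flanking equilateral triangles. The trade-off is clear: the paper stays entirely within synthetic geometry (its stated aim) at the cost of a longer dissection argument, while your trigonometric route is shorter for statement 3 and gives the quantitative area formulas (and Weitzenb\"ock's inequality as a by-product), but imports computation the paper deliberately avoids. Two points worth tightening if you keep your version: the perpendicular-chord step silently assumes $J \ne A, B, C$ (it degenerates when an angle of $\tr ABC$ equals $120^{\circ}$, an edge case the paper also glosses over), and the sign of the inner triangle in the ``algebraic sum'' deserves a direct orientation argument rather than the indirect remark about $GG_1'^{2} \ge 0$.
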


\begin{figure}
\centering%
\includegraphics[width=120mm]{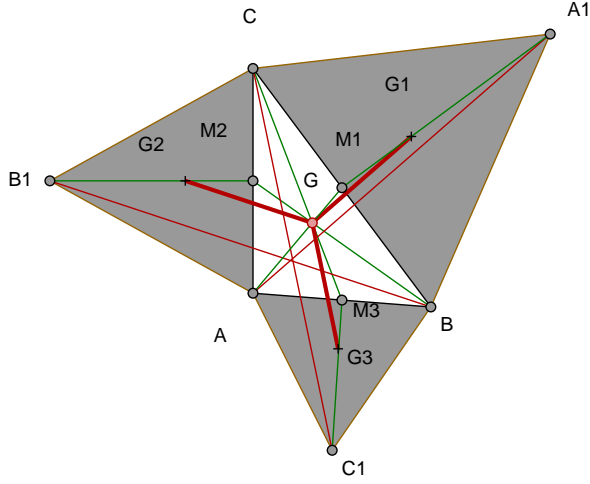}
\caption{Constructions in the Proof of Napoleon's Theorem.}
\label{FigProofNapoleon1}
\end{figure}

\begin{proof}
We start with first claim of the theorem. Let $M_1, M_2$ and $M_3$
be the midpoints of the edges $BC, CA$ and $AB$ respectively.
Since $G$ is the centroid of $\tr ABC$ and $G_2$ is the centroid
of $\tr AB_1C,$ we have the ratios $M_2G : M_2B = M_2G_2 : M_2B_1
= 1 : 3.$ Therefore, by the Intercept Theorem $GG_2 = \frac{1}{3}
BB_1$ and $GG_2$ is parallel to $BB_1.$ Analogously, $GG_1 =
\frac{1}{3} AA_1, GG_1$ is parallel to $AA_1,$ $GG_3 = \frac{1}{3}
CC_1$ and $GG_3$ is parallel to $CC_1.$ By part 1 of Lemma
\ref{BasicLemma}, $AA_1 = BB_1 = CC_1,$ hence $GG_1 = GG_2 =
GG_3.$ By part 3 of Lemma \ref{BasicLemma}, $\angl AJB = \angl BJC
= \angl CJA = 120^{\circ},$ so $\angl G_1GG_2 = \angl G_2GG_3 =
\angl G_3GG_1 = 120^{\circ}.$ We can conclude form here that $\tr
G_1G_2G \cong \tr G_2G_3G \cong \tr G_3G_1G$ and hence $G_1G_2 =
G_2G_3 = G_3G_1,$ that is the triangle $\tr G_1G_2G_3$ is
equilateral.

\begin{figure}
\centering%
\includegraphics[width=120mm]{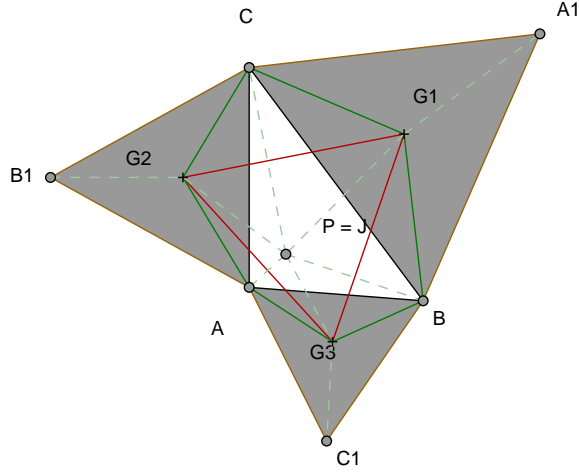}
\caption{Constructions in the Proof of Napoleon's Theorem.}
\label{FigProofNapoleon2}
\end{figure}

The proof of claim 2 from Napoleon's Theorem is analogous to the
proof of claim 1. We just have to consider overlapping
configurations and to rename the notations appropriately.

In order to prove claim 3 form Theorem \ref{Napoleon}, we are
going to show that $Area(\tr G_1G_2G_3) = \frac{1}{2} Area(\tr
ABC) + \frac{1}{6}(Area(\tr A_1BC) + Area(\tr AB_1C) + Area(\tr
ABC_1)).$ Let point $P$ be the reflection image of the vertex $C$
with respect to the line $G_1G_2.$ In other words, $P$ is chosen
so that $G_1G_2$ is the perpendicular bisector of $CP.$ Hence,
$\tr G_1G_2C \cong \tr G_1G_2P$ and $G_2P = G_2C = G_2A.$ If we
denote $\angl G_1G_2C = \alpha$ then $\angl PG_2G_1 = \alpha.$ On
the one hand, $\angl AG_2P = \angl AG_2C - \angl PG_2C =
120^{\circ} - \angl PG_2C = 120^{\circ} - (\angl PG_2G_1 + \angl
G_1G_2C) = 120^{\circ} - 2 \alpha.$ On the other hand, $\angl
G_3G_2P = \angl G_3G_2G_1 - \angl PG_2G_1 = 60^{\circ} - \alpha.$
Therefore, $\angl AG_2G_3 = \angl AG_2P - \angl G_3G_2P =
120^{\circ} - 2 \alpha - (60^{\circ} - \alpha) = 60^{\circ} -
\alpha.$ Since $G_2P = G_2A$ and $\angl AG_2G_3 = \angl G_3G_2P =
60^{\circ} - \alpha,$ the line $G_2G_3$ is the bisector of $\angl
AG_2P$ in the isosceles triangle $\tr AG_2P,$ and hence it is the
perpendicular bisector of the segment $AP.$ Therefore, $P$ is the
reflection image of $A$ with respect to $G_2G_3$ and $\tr G_2G_3A
\cong \tr G_2G_3P.$ Analogously, we can show that the reflection
of $B$ with respect to $G_3G_1$ is again $P$ and $\tr G_3G_1B
\cong \tr G_3G_1P.$ All of the arguments above lead to the
conclusion that $Area(\tr G_1G_2G_3) = Area(\tr G_1G_2P) +
Area(\tr G_2G_3P) + Area(\tr G_3G_1P) = Area(\tr G_1G_2C) +
Area(\tr G_2G_3A) + Area(\tr G_3G_1B),$ so $Area(\tr G_1G_2G_3) =
\frac{1}{2} Area(AG_3BG_1CG_2).$ Notice that $Area(AG_3BG_1CG_2) =
Area(\tr ABC) + Area(\tr AG_3B) + Area(\tr BG_1C) + Area(\tr
CB_2A) = Area(\tr ABC) + \frac{1}{3}(Area(\tr A_1BC) + Area(\tr
AB_1C) + Area(\tr ABC_1)).$ It follows from here that $Area(\tr
G_1G_2G_3) = \frac{1}{2} Area(\tr ABC) + \frac{1}{6}(Area(\tr
A_1BC) + Area(\tr AB_1C) + Area(\tr ABC_1)).$

Using analogous arguments, one can show that $Area(\tr
G_1^{'}G_2^{'}G_3^{'}) = \frac{1}{2} Area(\tr ABC) -
\frac{1}{6}(Area(\tr A_1^{'}BC) + Area(\tr AB_1^{'}C) + Area(\tr
ABC_1^{'})).$ Now, we can deduce that $Area(\tr G_1G_2G_3) +
Area(\tr G_1^{'}G_2^{'}G_3^{'}) = Area(\tr ABC)$

An additional observation is that $G_1P = G_1B = G_1C = G_1A_1$
and therefore $P$ lies on the circle $K_1,$ circumscribed around
$\tr A_1BC$ (see Lemma \ref{BasicLemma} and Figure
\ref{FigLemma1}.) Similarly, $P$ lies on the circles $K_2$ and
$K_3$ circumscribed around $\tr AB_1C$ and $\tr ABC_1$
respectively. That implies $P$ is the intersection point of $K_1,
K_2$ and $K_3,$ which was already denoted by $J,$ i.e. $P \equiv
J.$
\end{proof}

\begin{figure}
\centering%
\includegraphics[width=120mm]{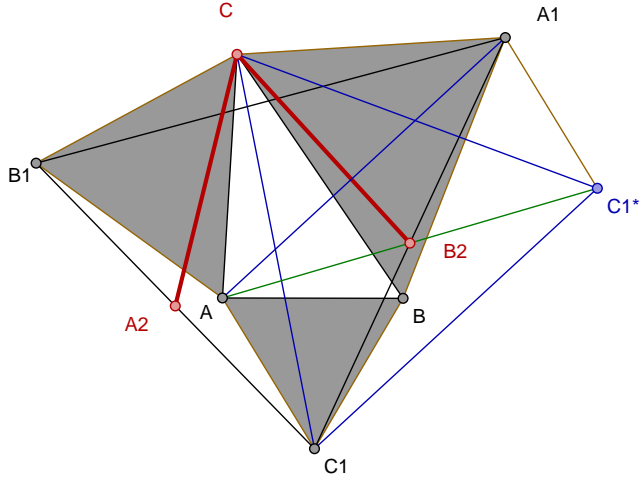}
\caption{Constructions in the Proof of Lemma \ref{Lemma1}.}
\label{FigLemma2}
\end{figure}

\section{Proof of Theorem \ref{Main}}

This section contains the proof of the main result, namely Theorem
\ref{Main}. To prove this statement we are going to use several
lemmas and corollaries which together will give us the desired
result.

The next lemma is essentially the proof of fact 1 from Theorem
\ref{Main}.

\begin{lem} \label{Lemma1}
In the setting of Theorem \ref{Main}, the points $A_2, B_2$ and
$C$ form an equilateral triangle (see Figure \ref{FigLemma2}.)
\end{lem}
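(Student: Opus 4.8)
The plan is to exploit the complex-number (or rotation-composition) structure that Lemma \ref{BasicLemma} already reveals, but to stay synthetic as the paper demands. Recall that $A_1, B_1, C_1$ sit at the apices of equilateral triangles erected externally on $BC, CA, AB$. I would first recall the key fact from the proof of Lemma \ref{BasicLemma}: the $60^\circ$ counterclockwise rotation $R_A$ about $A$ sends $C \mapsto B_1$ and $C_1 \mapsto B$. By the same reasoning, the $60^\circ$ counterclockwise rotation $R_B$ about $B$ sends $A \mapsto C_1$ and $A_1 \mapsto C$, and the $60^\circ$ counterclockwise rotation $R_C$ about $C$ sends $B \mapsto A_1$ and $B_1 \mapsto A$. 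These three rotations are the engine of everything.

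**The main step.** To show $\tr A_2 B_2 C$ is equilateral, it suffices to exhibit a $60^\circ$ rotation about some vertex carrying one side to another — equivalently, to show that the rotation by $60^\circ$ about $C$ (or by $-60^\circ$) maps $A_2$ to $B_2$, or that $A_2$ is obtained from $B_2$ by a $60^\circ$ turn about $C$. Since $A_2$ is the midpoint of $B_1C_1$ and $B_2$ is the midpoint of $C_1A_1$, and midpoints commute with any isometry, I would look for an isometry sending the pair $\{B_1, C_1\}$ to the pair $\{C_1, A_1\}$ — but that is not quite a single rotation about $C$. The cleaner route: use the composition $R_C \circ R_A$ (or an appropriate order and signs). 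Composing two $60^\circ$ rotations gives a $120^\circ$ rotation about a point we can identify; I expect that the relevant composite rotation about $C_1$ (by $120^\circ$, say $R_{B}\circ R_{A}$ type combinations) carries $A_1$ to $B_1$ — indeed $R_B: A_1 \mapsto C$ and $R_C^{-1}$... — so I would track $A_1 \mapsto B_1$ under a $120^\circ$ rotation about $C_1$, giving $C_1 A_1 = C_1 B_1$ and $\angl A_1 C_1 B_1 = 120^\circ$; but this alone says $\tr A_1 B_1 C_1$ is isoceles at $C_1$ with apex $120^\circ$, which combined with the analogous statements at $A_1$ and $B_1$ would force $\tr A_1 B_1 C_1$ equilateral — too strong and false in general, so I must be careful with which composites I use. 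The correct bookkeeping is: $A_2 - C$ and $B_2 - C$ should differ by multiplication by $e^{\pm i\pi/3}$, and writing $A_2 = \tfrac12(B_1 + C_1)$, $B_2 = \tfrac12(C_1 + A_1)$, I need $B_1 + C_1 - 2C = e^{i\pi/3}(C_1 + A_1 - 2C)$ (up to sign/conjugation), which I will verify by substituting the rotational relations $B_1 = A + e^{i\pi/3}(C - A)$ etc. — but done synthetically via midpoint-of-a-rotated-segment arguments rather than coordinates.

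**Executing it synthetically.** Concretely, I would introduce the midpoint $N$ of $C C_1$ and the midpoint $P$ of... actually the cleanest synthetic device is: apply the $60^\circ$ rotation $R_C$ about $C$ that sends $B \mapsto A_1$ and $B_1 \mapsto A$; then the segment $B_1 B$ maps to segment $A A_1$, so the \emph{midpoint} of $B_1 B$ maps to the midpoint of $A A_1$. Combine with the Intercept Theorem as used in the Napoleon proof to relate $A_2$ (midpoint of $B_1C_1$) to midpoints of the segments $B B_1$ and $C C_1$. I would set up a parallelogram or a midline argument: $A_2$, being the midpoint of $B_1 C_1$, and using that $C$ is a vertex of both equilateral triangles $\tr A B_1 C$ and (via $A_1$) related to $\tr A_1 BC$, express $\overrightarrow{C A_2}$ and $\overrightarrow{C B_2}$ each as a half-sum of two vectors of the form $\overrightarrow{C B_1}$, $\overrightarrow{C C_1}$, $\overrightarrow{C A_1}$, and then invoke the already-established $60^\circ$ rotational relations among those vectors to conclude $\overrightarrow{C A_2}$ is $\overrightarrow{C B_2}$ rotated by $60^\circ$.

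**The main obstacle.** The hard part will be organizing the angle/rotation bookkeeping so that it genuinely stays synthetic (chasing inscribed angles and congruent triangles, in the style of the Lemma \ref{BasicLemma} proof) rather than silently becoming complex-number algebra, and getting the \emph{orientation} right — whether the rotation is $+60^\circ$ or $-60^\circ$ — since the non-overlapping hypothesis is exactly what pins this down, and an error in sign would wrongly conflate the non-overlapping and overlapping cases. Once the single rotation sending $\overrightarrow{CB_2}$ to $\overrightarrow{CA_2}$ is correctly identified and justified, the equilaterality of $\tr A_2 B_2 C$ is immediate, and facts 3 (the overlapping analogue) and the remaining two equilateral triangles in fact 1 follow by the symmetry of the construction under cyclic relabeling of $(A,B,C)$.
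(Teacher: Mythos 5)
Your opening move is exactly the paper's: apply the $60^{\circ}$ rotation $R_C$ about $C$ (which sends $B_1\mapsto A$ and $B\mapsto A_1$) and use the fact that midpoints commute with isometries, so that it suffices to show $R_C(A_2)=B_2$. But the proposal stops precisely where the real work begins, and the one concrete closing move you do propose does not go through as stated. Writing $\overrightarrow{CA_2}=\tfrac12(\overrightarrow{CB_1}+\overrightarrow{CC_1})$ and $\overrightarrow{CB_2}=\tfrac12(\overrightarrow{CC_1}+\overrightarrow{CA_1})$, the rotation $R_C$ carries $\overrightarrow{CB_1}$ to $\overrightarrow{CA}$ and $\overrightarrow{CB}$ to $\overrightarrow{CA_1}$, but there is \emph{no} already-established $60^{\circ}$ relation taking $\overrightarrow{CC_1}$ to any of the other vectors in your list; $R_C$ sends $C_1$ to a brand-new point. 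So ``invoke the already-established rotational relations among those vectors'' is vacuous at exactly the term that matters, and the identity $R_C(A_2)=B_2$ reduces to the unproved vector equation $\overrightarrow{C_1C_1^{*}}=\overrightarrow{AA_1}$, where $C_1^{*}=R_C(C_1)$. You also candidly flag that you have not pinned down which rotation or composite works and in which orientation --- but identifying and justifying that single rotation \emph{is} the lemma; the digression through $120^{\circ}$ composites is, as you yourself note, a dead end.

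The missing idea, which is the entire content of the paper's proof, is to name $C_1^{*}=R_C(C_1)$ and prove that $AA_1C_1^{*}C_1$ is a parallelogram. This follows from two length computations: $\tr CC_1C_1^{*}$ is equilateral (since $CC_1=CC_1^{*}$ and $\angl C_1CC_1^{*}=60^{\circ}$), so $C_1C_1^{*}=CC_1=AA_1$ by part 1 of Lemma \ref{BasicLemma}; and $A_1C_1^{*}=BC_1=AC_1$ because $R_C$ carries the segment $BC_1$ to $A_1C_1^{*}$. With both pairs of opposite sides equal, the diagonals $A_1C_1$ and $AC_1^{*}$ bisect each other, so $B_2$ (midpoint of $A_1C_1$) is the midpoint of $AC_1^{*}$, i.e.\ $B_2=R_C(A_2)$, whence $CA_2=CB_2$ and $\angl A_2CB_2=60^{\circ}$. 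Without this parallelogram (or an equivalent substitute supplying the relation between $\overrightarrow{CC_1}$ and its rotated image), your argument does not close.
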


\begin{figure}
\centering%
\includegraphics[width=120mm]{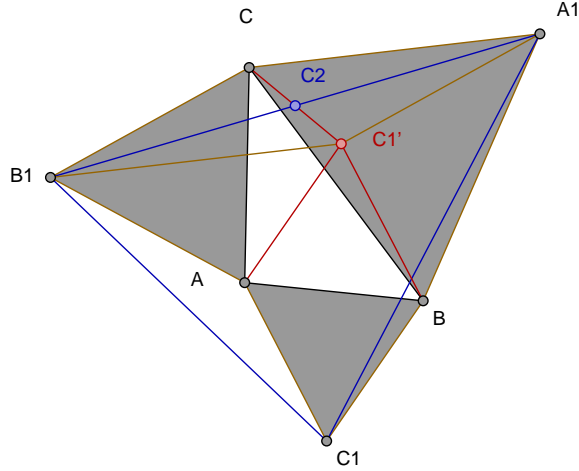}
\caption{Constructions in the Proof of Lemma \ref{Midpoint}.}
\label{FigMidpoint}
\end{figure}
\begin{proof}
Consider a $60^{\circ}$ rotation $R_C$ around the point $C$ in
counterclockwise direction. The point $B_1$ maps to $A.$ Denote by
$C_1^{*}$ the image of the point $C_1$ (Figure \ref{FigLemma2}.)
Then, $B_1C_1$ maps to $AC_1^{*}.$ We are going to show that the
point $B_2$ is the image of $A_2$ under the rotation $R_C.$ Since
the midpoint $A_2$ of $B_1C_1$ maps to the midpoint of the image
$AC_1^{*},$ we need to prove that $B_2$ lies on $AC_1^{*}$ and is
the midpoint of that segment.

By the properties of the rotation $R_C,$ we have that
$CC_1=CC_1^{*}$ and $\angl C_1CC_1^{*} = 60^{\circ}.$ Therefore
triangle $\tr CC_1C_1^{*}$ is equilateral and so by Lemma
\ref{BasicLemma} we can deduce that $C_1C_1^{*} = CC_1 = AA_1.$

Notice that the point $A_1$ is the image of $B$ under the rotation
$R_C.$ Since $C_1$ maps to $C_1^{*}$ then $BC_1$ maps to
$A_1C_1^{*}.$ Thus, $A_1C_1^{*} = BC_1 = AC_1.$

The facts that $CC_1 = AA_1$ and $A_1C_1^{*} = AC_1$ imply that
the quadrilateral $AA_1C_1^{*}C_1$ is a parallelogram. For any
parallelogram, the intersection point of the diagonals is the
midpoint for for both diagonals. That means that the midpoint
$B_2$ of the diagonal $C_1A_1$ lies on the diagonal $AC_1^{*}$ and
is the midpoint of $AC_1^{*}$. Therefore, $B_2$ is the image of
$A_2$ under the rotation $R_C.$ Hence, $CA_2 = CB_2$ and $\angl
A_2CB_2 = 60^{\circ},$ i.e. the triangle $\tr A_2B_2C$ is
equilateral.
\end{proof}

We are going to need the following intermediate statement:

\begin{lem}\label{Midpoint}
Consider the equilateral triangle $\tr ABC^{'}_1,$ overlapping
$\tr ABC.$ Then, the midpoint $C_2$ of the segment $A_1B_1$ is
also the midpoint of $CC_1^{'}$ (see Figure \ref{FigMidpoint}.)
\end{lem}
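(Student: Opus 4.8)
The statement to establish is that in a configuration where $\tr A_1BC$, $\tr AB_1C$ are the non-overlapping equilateral triangles (so $A_1$, $B_1$ come from the original configuration) while $\tr ABC_1'$ is the \emph{overlapping} equilateral triangle on side $AB$, the midpoint $C_2$ of $A_1B_1$ coincides with the midpoint of $CC_1'$. Equivalently, I must show that $CA_1C_1'B_1$ is a parallelogram, i.e.\ that $\vec{CA_1} = \vec{B_1C_1'}$ (and symmetrically $\vec{CB_1} = \vec{A_1C_1'}$), since then the two diagonals $A_1B_1$ and $CC_1'$ bisect each other at a common midpoint.

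\textbf{Main approach: a rotation argument, mirroring Lemma \ref{Lemma1}.} The plan is to use a $60^\circ$ rotation about $C$, exactly as in the proof of Lemma \ref{Lemma1}, but now tracking the overlapping triangle. Perform the $60^\circ$ rotation $R_C$ about $C$ (in the direction that sends $B_1$ to $A$, as before). Under $R_C$, since $\tr AB_1C$ is equilateral, $B_1 \mapsto A$; and since $\tr A_1BC$ is equilateral, $B \mapsto A_1$. Now I want to identify the image of $A_1$: it is the point $A_1^*$ with $CA_1^* = CA_1$ and $\angl A_1CA_1^* = 60^\circ$. The key claim will be that this image $A_1^*$ equals $C_1'$, the apex of the overlapping equilateral triangle on $AB$. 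To see this, note that $R_C$ maps the segment $BA_1$ to $A_1A_1^*$, so $A_1A_1^* = BA_1 = BC$ (edge of the equilateral $\tr A_1BC$); and $R_C$ maps $B_1B$ to $AA_1^*$, so $AA_1^* = B_1B$. By Lemma \ref{BasicLemma} (part 1), $AA_1 = BB_1 = CC_1$, and one checks that these length relations force $A_1^*$ to lie at distance $AB$ from both $A$ and $B$ on the correct side — that is, $A_1^*$ is one of the two equilateral-triangle apices on $AB$; chasing the orientation shows it is the overlapping one, $C_1'$. Then, just as in Lemma \ref{Lemma1}, $B_1 \mapsto A$ and $A_1 \mapsto C_1'$ under $R_C$ means segment $B_1A_1 \mapsto AC_1'$, so $A_2 = $ midpoint of $B_1A_1$ maps to the midpoint of $AC_1'$; combined with the analogous statement from the reverse rotation $R_C^{-1}$ (sending $A_1 \mapsto B$, $C_1' \mapsto B_1$), one deduces $CA_1C_1'B_1$ is a parallelogram, hence $A_1B_1$ and $CC_1'$ share the midpoint $C_2$.

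\textbf{Alternative / backup approach.} If the orientation bookkeeping in the rotation argument proves delicate, a cleaner route is to show directly that $CA_1C_1'B_1$ is a parallelogram by exhibiting the single rotation $R_C$ (by $60^\circ$) that carries the ordered pair $(B_1, A_1)$ to $(A, C_1')$: since a rotation preserves the ``difference vector'' up to its own rotation, $\vec{B_1 A_1}$ maps to $\vec{A C_1'}$, and because $R_C$ also maps $C$ to $C$, applying $R_C$ to the quadrilateral $C B_1 ? A_1$ and matching images pins down the fourth vertex. One then only needs: (a) $B_1 \mapsto A$ under $R_C$ — immediate from $\tr AB_1C$ equilateral; (b) $A_1 \mapsto C_1'$ under $R_C$ — the crux. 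For (b) it may be simplest to verify $CA_1 = CC_1'$ is \emph{false} in general (these are not equal), so instead verify $\angl(\vec{CA_1}, \vec{CC_1'})$... actually the robust statement is just: the image $A_1^* = R_C(A_1)$ satisfies $|AA_1^*| = |BB_1| = |AB \text{-apex distance}|$ and lies on the $AB$ side making $\tr ABA_1^*$ equilateral and overlapping, so $A_1^* = C_1'$ by uniqueness.

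\textbf{Expected main obstacle.} The one genuinely nontrivial point — and the step I expect to need the most care — is pinning down that the image of $A_1$ under $R_C$ is the \emph{overlapping} apex $C_1'$ rather than the non-overlapping one $C_1$, and that all the length identities ($A A_1^* = AB$, $B A_1^* = AB$) actually hold. This hinges on Lemma \ref{BasicLemma}'s equality $AA_1 = BB_1 = CC_1$ together with a careful tracking of which of the two $60^\circ$ rotations about $C$ is being used and how it interacts with the orientation conventions that distinguish overlapping from non-overlapping configurations. Once the correct rotation is fixed and that single identification $A_1 \mapsto C_1'$ is secured, the parallelogram conclusion and hence the coincidence of midpoints follows exactly as in the proof of Lemma \ref{Lemma1}, with essentially no further computation.
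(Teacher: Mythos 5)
You have chosen the right target: showing that $CA_1C_1'B_1$ is a parallelogram does immediately give that the diagonals $A_1B_1$ and $CC_1'$ share their midpoint, and this is exactly what the paper proves. However, your route to the parallelogram breaks down at precisely the step you flag as the crux, and it cannot be repaired: the claim that the $60^{\circ}$ rotation $R_C$ about $C$ sends $A_1$ to $C_1'$ is false. A rotation centered at $C$ preserves distances from $C$, so it could send $A_1$ to $C_1'$ only if $CA_1 = CC_1'$; but $CA_1 = CB$ (a side of the equilateral triangle $\tr A_1BC$), while $CC_1'$ is an unrelated, typically much shorter length (it can even degenerate to $0$ when $C$ happens to coincide with $C_1'$). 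A numerical check with $A=(0,0)$, $B=(1,0)$, $C=(0.3,0.8)$ gives $R_C(A_1)\approx(0.64,\,1.81)$ versus $C_1'=(0.5,\,\sqrt3/2)$. There is also a bookkeeping slip feeding into the argument: since $R_C(B_1)=A$ and $R_C(B)=A_1$, the segment $B_1B$ maps to $AA_1$, not to $AA_1^{*}$, so the relation $AA_1^{*}=B_1B$ that you invoke is not what the rotation yields, and the identities you do obtain ($A_1A_1^{*}=BC$, $CA_1^{*}=CA_1$) never force $A_1^{*}$ to lie at distance $AB$ from both $A$ and $B$. Your backup approach depends on the same false identification $A_1\mapsto C_1'$ under a rotation centered at $C$, so it fails for the same reason.

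The fix is to move the centers of rotation to $A$ and $B$, which is what the paper does. A $60^{\circ}$ rotation about $A$ carries $B\mapsto C_1'$ and $C\mapsto B_1$, giving $C_1'B_1 = BC = CA_1$; a $60^{\circ}$ rotation about $B$ carries $A\mapsto C_1'$ and $C\mapsto A_1$, giving $C_1'A_1 = AC = CB_1$. Thus the quadrilateral $B_1C_1'A_1C$ has both pairs of opposite sides equal and is a parallelogram, whence its diagonals $A_1B_1$ and $CC_1'$ bisect each other. The reason these two rotations work while yours cannot is that each is centered at a common vertex of the two equilateral triangles being matched ($\tr ABC_1'$ shares the vertex $A$ with $\tr AB_1C$, and the vertex $B$ with $\tr A_1BC$), so the required vertex correspondences are genuine isometries; the point $C$ is not a vertex of $\tr ABC_1'$, so no rotation about $C$ can relate $A_1$ or $B_1$ to $C_1'$.
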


\begin{proof}
Consider a $60^{\circ}$ degree clockwise rotation around the point
$A.$ Then $B$ maps to $C_1^{'}$ and $C$ maps to $B_1$. Therefore
the segment $BC$ maps to the segment $C_1^{'}B_1,$ so $BC =
C_1^{'}B_1.$

Now consider a $60^{\circ}$ degree counter-clockwise rotation
around the point $B.$ In this case $A$ maps to $C_1^{'}$ and $C$
maps to $A_1.$ Thus, the segment $AC$ maps to $C_1^{'}A_1,$ so $AC
= C_1^{'}A_1.$

From the two identities $BC = C_1^{'}B_1$ and $AC = C_1^{'}A_1$ it
can be concluded that the quadrilateral $B_1C_1^{'}A_1C$ is a
parallelogram. Therefore, the midpoint $C_2$ of the diagonal
$A_1B_1$ is also the midpoint of the diagonal $CC_1^{'}.$
\end{proof}

\begin{figure}
\centering%
\includegraphics[width=120mm]{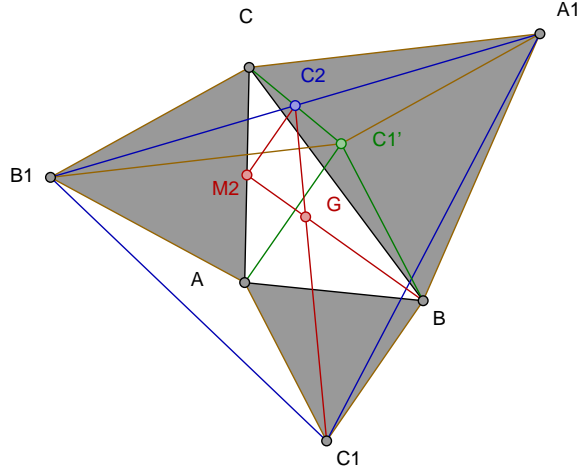}
\caption{Constructions in the Proof of Lemma
\ref{CommonCentroid}.} \label{FigCommonCentroid}
\end{figure}

Next, we are going to locate the centroids of $\tr A_1B_1C_1$ and
$\tr A_2B_2C_2.$

\begin{lem}\label{CommonCentroid}
The centroids of $\tr A_1B_1C_1$ and $\tr A_2B_2C_2$ coincide with
the centroid $G$ of $\tr ABC$ (see Figure
\ref{FigCommonCentroid}.)
\end{lem}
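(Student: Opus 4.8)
The plan is to use complex-number-free, purely synthetic reasoning based on the structure that has already been established, exploiting the fact that centroids transform nicely under the rotations introduced in the previous lemmas. First I would recall that $G$, being the centroid of $\tr ABC$, is the average of $A$, $B$, $C$; what we want is to show it is also the average of $A_1$, $B_1$, $C_1$, and the average of $A_2$, $B_2$, $C_2$. Since $A_2$, $B_2$, $C_2$ are midpoints of the sides of $\tr A_1B_1C_1$, the two triangles $\tr A_1B_1C_1$ and $\tr A_2B_2C_2$ automatically share the same centroid (the medial triangle always has the same centroid as the original), so it suffices to prove that the centroid of $\tr A_1B_1C_1$ is $G$.

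To do that purely geometrically I would use the $60^\circ$ rotations $R_A$, $R_B$, $R_C$ from Lemma \ref{BasicLemma} together with the parallelogram facts already proved. From Lemma \ref{Midpoint} (applied to the overlapping equilateral triangle on $AB$) we know $C_2$ is the midpoint of $CC_1'$; but more usefully, the same style of argument shows that $B_1C_1A_1C$-type parallelograms tie the vertex $C$ of the original triangle to the configuration points. The cleanest route: show that $\overrightarrow{CA_1} + \overrightarrow{CB_1} + \overrightarrow{CC_1} = \overrightarrow{CA} + \overrightarrow{CB} + \overrightarrow{CC}$ by producing, via the three rotations, explicit congruent-triangle/parallelogram identifications of the relevant segments. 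Concretely, the rotation $R_C$ sends $B_1\mapsto A$ and (from the proof of Lemma \ref{Lemma1}) sends the parallelogram $AA_1C_1^{*}C_1$ into place, which encodes $\overrightarrow{A_1C_1} = \overrightarrow{AC_1^{*}}$; chasing these relations lets one express $A_1+B_1+C_1$ in terms of $A+B+C$. Equivalently, one can invoke the identity that each $G_i$ is obtained from $G$ by the Intercept-Theorem relation $\overrightarrow{GG_1}=\tfrac13\overrightarrow{AA_1}$ etc.\ (established in the proof of Napoleon's Theorem), and note $A_1 = G_1 + \tfrac13(\text{stuff})$; summing and using $G_1+G_2+G_3 = 3G$ (which is part 1 of Theorem \ref{Napoleon}) gives the result.

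Indeed the shortest argument uses only already-proved facts: by Napoleon's Theorem the centroid of $\tr G_1G_2G_3$ is $G$, and $G_i$ is the centroid of the equilateral triangle erected on the side opposite vertex $i$; since $A_1$ is a vertex of the equilateral $\tr A_1BC$ with centroid $G_1$, we have $\overrightarrow{GA_1} = \overrightarrow{GG_1} + \overrightarrow{G_1A_1}$, and similarly for $B_1,C_1$. The three vectors $\overrightarrow{G_1A_1}$, $\overrightarrow{G_2B_1}$, $\overrightarrow{G_3C_1}$ are the ``apex'' radii of the three Napoleon triangles; the content to verify is that their sum is zero. This is exactly where I expect the main obstacle: one must check that these three equal-length segments point in directions $120^\circ$ apart, so that they cancel. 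This follows because each such radius is perpendicular to the corresponding side of $\tr ABC$ (the line joining a vertex of an equilateral triangle to its centroid is perpendicular to the opposite side) and has fixed length $\tfrac{1}{\sqrt3}$ times that side — but the sides of a triangle, rotated $90^\circ$ and weighted by their lengths, sum to zero, so the outward apex-radii do too. Once that cancellation is in hand, $\overrightarrow{GA_1}+\overrightarrow{GB_1}+\overrightarrow{GC_1} = \overrightarrow{GG_1}+\overrightarrow{GG_2}+\overrightarrow{GG_3} = \vec{0}$, so $G$ is the centroid of $\tr A_1B_1C_1$, hence of its medial triangle $\tr A_2B_2C_2$ as well, completing the proof.
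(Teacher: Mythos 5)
Your proof is correct, but it follows a genuinely different path from the paper's. Both arguments reduce to showing that $G$ is the centroid of $\tr A_1B_1C_1$ (the medial triangle $\tr A_2B_2C_2$ then inherits it automatically), but from there you compute with vectors: you split $\overrightarrow{GA_1}+\overrightarrow{GB_1}+\overrightarrow{GC_1}$ through the Napoleon centroids, kill $\overrightarrow{GG_1}+\overrightarrow{GG_2}+\overrightarrow{GG_3}$ by part 1 of Theorem \ref{Napoleon}, and kill the apex radii $\overrightarrow{G_1A_1}+\overrightarrow{G_2B_1}+\overrightarrow{G_3C_1}$ by observing that each is the outward normal to a side of $\tr ABC$ with length $1/\sqrt{3}$ times that side, so their sum is a $90^{\circ}$ rotation of $\overrightarrow{BC}+\overrightarrow{CA}+\overrightarrow{AB}=\vec{0}$. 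This is sound and non-circular, since Napoleon's Theorem is proved before and independently of this lemma; the variant you mention in passing --- summing $\overrightarrow{GG_1}=\tfrac13\overrightarrow{AA_1}$ and its analogues to get $A_1+B_1+C_1=A+B+C$ --- is an even shorter form of the same idea and avoids the apex-radius computation entirely. The paper instead argues synthetically and makes no use of Napoleon's Theorem here: Lemma \ref{Midpoint} makes $M_2C_2$ a mid-segment of $\tr AC_1^{'}C$, the rhombus $AC_1BC_1^{'}$ converts this to $M_2C_2\parallel BC_1$ with $2M_2C_2=BC_1$, and the intercept theorem then shows that the median $BM_2$ of $\tr ABC$ and the median $C_1C_2$ of $\tr A_1B_1C_1$ cross at a point dividing both in ratio $2:1$, i.e.\ at the common centroid. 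Your route buys brevity at the cost of vector algebra and a dependence on Theorem \ref{Napoleon}; the paper's stays within the purely synthetic framework it advertises. Note that only your final, committed argument is complete --- the earlier sketches (the $\overrightarrow{CA_1}+\overrightarrow{CB_1}+\overrightarrow{CC_1}$ route and the ``$A_1=G_1+\tfrac13(\text{stuff})$'' remark) are left vague, but the last paragraph supersedes them.
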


\begin{proof}
 Let $M_2$ be the midpoint of $AC.$ Then $M_2C_2$ is a mid-segment of the triangle
$\tr AC_1^{'}C.$ Therefore, $M_2C_2$ is parallel to $AC_1^{'}$ and
$2 M_2C_2 = AC_1^{'}.$ Since triangles $\tr ABC_1$ and $\tr
ABC_1^{'}$ are equilateral, the quadrilateral $AC_1BC_1^{'}$ is a
rhombus, so $AC_1^{'} = C_1B$ and $AC_1^{'}$ is parallel to
$C_1B.$ Hence $M_2C_2$ is parallel to $ C_1B$ and $2 M_2C_2 =
C_1B.$ Let $G^{'}$ be the intersection point of $BM_2$ and
$C_1C_2.$ From here we can deduce that $BG : G^{'}M_2 = C_1G^{'} :
G^{'}C_2 = BC_1 : C_2M_2 = 2 : 1.$ But for the centroid $G$ of
$\tr ABC$ it is true that $BG : GM_2 = 2 : 1,$ so $G \equiv G^{'}$
and $G$ is the centroid of $\tr A_1B_1C_1.$ Since the triangles
$\tr A_1B_1C_1$ and $\tr A_2B_2C_2$ have a common centroid, the
statement is proved.
\end{proof}

The following corollary proves statements 1 and 2 from Theorem
\ref{Main}.

\begin{cor} \label{ProofConfig}
The points $A, B$ and $C$ form an overlapping Napoleon's
configuration for $\tr A_2B_2C_2.$ Moreover, the centroids $A^{*},
B^{*}, C^{*}$ of the equilateral triangles $\tr AB_2C_2, \tr
A_2BC_2$ and $\tr A_2B_2C$ respectively form an equilateral
triangle, whose centroid coincides with the centroid $G$ of $\tr
ABC.$
\end{cor}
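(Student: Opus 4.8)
The plan is to reduce Corollary \ref{ProofConfig} entirely to the three lemmas just proved, treating $\tr A_2B_2C_2$ as the ``base triangle'' and verifying that the vertices $A, B, C$ sit in exactly the position required by Definition \ref{Configuration}. First I would observe that Lemma \ref{Lemma1} (applied with the three natural cyclic relabelings) already tells us that each of $\tr AB_2C_2,$ $\tr A_2BC_2,$ $\tr A_2B_2C$ is equilateral, which is the equilaterality half of the definition of a Napoleon configuration for $\tr A_2B_2C_2.$ What remains for the first assertion is the \emph{overlapping} part: I would argue that, because the midpoint triangle $\tr A_2B_2C_2$ lies inside $\tr A_1B_1C_1$ and the original vertices $A,B,C$ lie between the two (this is visible from Figure \ref{FigCommonCentroid} and follows because $A$ is the midpoint of $C_1^{*}\cdots$ type relations established in the lemmas, and more cleanly because $C$ lies on segment $A_2B_2$'s opposite side toward the interior), each equilateral triangle $\tr A B_2 C_2$ etc. is erected on the \emph{inner} side and therefore overlaps $\tr A_2B_2C_2.$ Concretely: the vertex $C$ of $\tr A_2 B_2 C$ and the third vertex $C_2$ of $\tr A_2 B_2 C_2$ lie on the same side of line $A_2B_2,$ which is precisely the overlapping condition.

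Next I would prove the second assertion about the centroids $A^{*}, B^{*}, C^{*}.$ Having established that $A, B, C$ form an overlapping Napoleon configuration for $\tr A_2B_2C_2,$ I would simply invoke part 2 of Napoleon's Theorem (Theorem \ref{Napoleon}) with $\tr A_2B_2C_2$ in the role of the base triangle and $A, B, C$ in the role of $A_1', B_1', C_1'.$ That theorem immediately gives that the centroids of the three erected equilateral triangles—which are exactly $A^{*}, B^{*}, C^{*}$—form an equilateral triangle whose centroid coincides with the centroid of $\tr A_2B_2C_2.$ The final step is then to identify that centroid with $G$: by Lemma \ref{CommonCentroid}, the centroid of $\tr A_2B_2C_2$ is $G,$ the centroid of $\tr ABC,$ so the centroid of $\tr A^{*}B^{*}C^{*}$ is $G$ as well. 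This chains the three lemmas together to yield both sentences of the corollary, and simultaneously establishes statements 1 and 2 of Theorem \ref{Main}.

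The main obstacle I anticipate is the bookkeeping around \emph{which} triangle is ``overlapping'' and which is ``non-overlapping,'' i.e. getting the orientations right. Napoleon's Theorem as stated distinguishes the two cases by the side on which the equilateral triangles are erected, and here we are flipping the roles of the configuration and its midpoint triangle, so I must carefully check that erecting equilateral triangles $\tr AB_2C_2$ (etc.) externally on $\tr ABC$ corresponds to erecting them \emph{internally} (overlapping) on $\tr A_2B_2C_2.$ I expect this to come out correctly because $A_2B_2C_2$ is a scaled-and-rotated copy of $A_1B_1C_1$ about $G$ (by Lemma \ref{CommonCentroid} the centroids agree, and $A_2,B_2,C_2$ are midpoints, so there is a homothety of ratio $-1/2$ relating the two), and the sign reversal in that homothety is exactly what swaps ``non-overlapping'' for ``overlapping.'' Spelling out this orientation argument cleanly—ideally with one appeal to Figure \ref{FigLemma2} or \ref{FigCommonCentroid} rather than coordinates—is the one place where care is needed; everything else is a direct citation of Lemmas \ref{Lemma1}, \ref{Midpoint}, \ref{CommonCentroid} and Theorem \ref{Napoleon}.
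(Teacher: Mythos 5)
Your proposal follows essentially the same route as the paper: apply Lemma \ref{Lemma1} three times (with cyclic relabelings) to get the three equilateral triangles, invoke part 2 of Napoleon's Theorem for the overlapping configuration on $\tr A_2B_2C_2$, and finish with Lemma \ref{CommonCentroid} to identify the centroid with $G$. The only difference is that you explicitly flag and sketch the verification that the configuration is \emph{overlapping} (same-side condition relative to each edge of $\tr A_2B_2C_2$), a point the paper's proof passes over silently; your extra care there is reasonable but does not change the argument.
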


\begin{proof}
By Lemma \ref{Lemma1} first applied to the triple $A,B_2,C_2,$
then to the triple $A_2, B, C_2$ and finally to the triple $A_2,
B_2, C$ we obtain the first statement of Corollary
\ref{ProofConfig}. Thus, the points $A, B$ and $C$ form an
overlapping Napoleon's configuration for $\tr A_2B_2C_2.$ By the
classical Napoleon's Theorem for overlapping configurations, it
follows that the centroids $A^{*}, B^{*}, C^{*}$ of $\tr AB_2C_2,
\tr A_2BC_2$ and $\tr A_2B_2C$ respectively form an equilateral
triangle whose centroid coincides with the centroid of $\tr
A_2B_2C_2.$ By Lemma \ref{CommonCentroid}, the centroid of $\tr
A_2B_2C_2$ coincides with the centroid $G$ of $\tr ABC.$ The
corollary is proved.
\end{proof}

Notice that the proof of statements 3 and 4 from Theorem
\ref{Main} is absolutely analogous to the proof of statements 1
and 2. All we have to do is to follow more or less the same
arguments, just changing the notation appropriately. What is left
is the verification of the last three claims from Theorem
\ref{Main}. We proceed with the following lemma:

\begin{figure}
\centering%
\includegraphics[width=120mm]{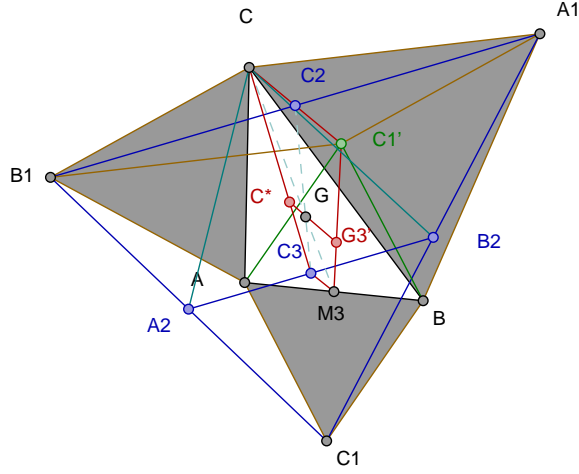}
\caption{Constructions in the Proof of Lemma \ref{Homothetic}.}
\label{FigHomothetic}
\end{figure}

\begin{lem} \label{Homothetic}
Consider the centroids $C^{*}$ and $G_3^{'}$ of the equilateral
triangles $\tr A_2B_2C$ and $\tr ABC_1^{'}$ respectively. Then
$G_3^{'}$ maps to $C^{*}$ under a homothetic transformation of
dilation factor $-1/2$ with respect to the centroid $G$ of $\tr
ABC$ (see Figure \ref{FigHomothetic}.)
\end{lem}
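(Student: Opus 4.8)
\textbf{Proof proposal for Lemma \ref{Homothetic}.}

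The plan is to exploit the two vectorial facts already established: that $G$ is the common centroid of $\tr ABC$, of $\tr A_1B_1C_1$, of $\tr A_2B_2C_2$ (Lemma \ref{CommonCentroid}), and that Lemma \ref{Midpoint} identifies $C_2$ with the midpoint of $CC_1^{'}$. I will place all points as vectors with origin at $G$, so that $A+B+C=0$ and $A_1+B_1+C_1=0$, and I will write $\omega$ for rotation by $60^{\circ}$ (the sign of the angle will be pinned down by the non-overlapping/overlapping conventions of Definition \ref{Configuration}, but the argument is insensitive to which of $\omega$, $\bar\omega$ appears as long as one is consistent). First I would express $C_1^{'}$, the apex of the equilateral triangle erected on $AB$ \emph{overlapping} $\tr ABC$: since $\tr ABC_1^{'}$ is equilateral with the overlapping orientation, $C_1^{'} = A + \bar\omega\,(B-A)$, and hence the centroid $G_3^{'} = \frac13(A+B+C_1^{'})$ is a fixed linear expression in $A,B$. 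Likewise $C^{*}$, the centroid of $\tr A_2B_2C$, equals $\frac13(A_2+B_2+C)$, and Lemma \ref{Lemma1} (with the triple $A_2,B_2,C$) tells us $\tr A_2B_2C$ is equilateral, which together with the orientation gives $A_2, B_2$ in terms of each other via $\omega$ about $C$.

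The key step is then to rewrite $A_2$ and $B_2$ in terms of the original data. Here I would use Lemma \ref{Midpoint} and its two cyclic analogues: $C_2=\frac12(C+C_1^{'})$, and similarly $A_2=\frac12(A+A_1^{'})$, $B_2=\frac12(B+B_1^{'})$, where $A_1^{'},B_1^{'},C_1^{'}$ is the \emph{overlapping} configuration on $\tr ABC$. Wait — I must be careful: Lemma \ref{Midpoint} as stated involves the non-overlapping $A_1,B_1$ on the left and the overlapping $C_1^{'}$ on the right, so the correct reading is that $A_2,B_2,C_2$ (the midpoints of the sides of the \emph{non-overlapping} Napoleon triangle $A_1B_1C_1$) are the midpoints of $AA_1^{'}$, $BB_1^{'}$, $CC_1^{'}$ respectively. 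Granting this, $C^{*}=\frac13(A_2+B_2+C)=\frac13\bigl(\tfrac12(A+A_1^{'})+\tfrac12(B+B_1^{'})+C\bigr)$, and using $A+B+C=0$ this collapses to $C^{*}=\frac16\bigl(A_1^{'}+B_1^{'}-C\bigr)+\frac13 C \cdot\text{(bookkeeping)}$; combined with $A_1^{'}+B_1^{'}+C_1^{'}=0$ (the overlapping Napoleon triangle also has centroid $G$, by part 2 of Theorem \ref{Napoleon}), one gets $A_1^{'}+B_1^{'}=-C_1^{'}$, so $C^{*}= \frac16(-C_1^{'}-C) + (\text{term in }C)$. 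A clean bookkeeping of the $C$-terms should yield exactly $C^{*} = -\tfrac12\,G_3^{'}$ in these $G$-centered coordinates, i.e. $\vec{GC^{*}} = -\tfrac12\,\vec{GG_3^{'}}$, which is the assertion.

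I would present the argument synthetically rather than in coordinates to match the paper's style: invoke Lemma \ref{Midpoint} to get $C_2$ as the midpoint of $CC_1^{'}$ and its analogue for $A_2$ as the midpoint of $AA_1^{'}$ and $B_2$ as the midpoint of $BB_1^{'}$; observe that $C^{*}$, being the centroid of $\tr A_2B_2C$, is the image of $G_3^{'}$, the centroid of $\tr ABC_1^{'}$, under the map sending each of $A\mapsto A_2$, $B\mapsto B_2$, $C_1^{'}\mapsto C$ — but since $A_2=\frac12(A+A_1^{'})$ etc. this map is an affine combination, and tracking centroids reduces everything to the vector identity above. The main obstacle I anticipate is precisely this orientation/labeling bookkeeping: making sure the correct one of the overlapping vs.\ non-overlapping configurations is attached to each of $A_2,B_2,C_2$ (the indices in the statement of Lemma \ref{Midpoint} are slightly delicate), and confirming that the leftover affine-combination terms assemble into the factor $-1/2$ about $G$ rather than some other homothety; once the role of part 2 of Theorem \ref{Napoleon} (common centroid $G$ for the overlapping configuration) is brought in, the constant should come out forced.
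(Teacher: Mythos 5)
Your proposal is essentially correct, but it takes a genuinely different route from the paper. The paper argues synthetically: it applies the homothety of factor $-1/2$ about $G$ to carry $A_1B_1$ to $A_2B_2$ and $C$ to the midpoint $M_3$ of $AB$, then uses the Intercept Theorem to show that $GC^{*}$ and $GG_3^{'}$ are both parallel to $CC_2$ with $C^{*}G=\frac16 CC_1^{'}$ and $G_3^{'}G=\frac13 CC_1^{'}$, so that $G$ lies on $C^{*}G_3^{'}$ and divides it in ratio $1:2$; the only instance of Lemma \ref{Midpoint} it needs is the one actually stated ($C_2$ the midpoint of $CC_1^{'}$). Your vector computation with origin at $G$ instead needs the two cyclic analogues of Lemma \ref{Midpoint} (that $A_2$ is the midpoint of $AA_1^{'}$ and $B_2$ of $BB_1^{'}$), which are true and follow by the same parallelogram argument under cyclic relabeling, plus the identity $A_1^{'}+B_1^{'}+C_1^{'}=0$ in $G$-centered coordinates. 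Two points deserve tightening: first, part 2 of Theorem \ref{Napoleon} asserts that the centroid of $\tr G_1^{'}G_2^{'}G_3^{'}$ is $G$, not that the centroid of $\tr A_1^{'}B_1^{'}C_1^{'}$ is $G$; the latter does follow (summing $G_i^{'}=\frac13(\dots)$ and using $A+B+C=0$), or more directly from the overlapping analogue of Lemma \ref{CommonCentroid}, but you should say which. Second, the middle of your computation is left as ``bookkeeping''; carried out, it gives $C^{*}=\frac16(A+B)+\frac16(A_1^{'}+B_1^{'})+\frac13 C=\frac16 C-\frac16 C_1^{'}=-\frac12\cdot\frac13(A+B+C_1^{'})=-\frac12 G_3^{'}$, which is the claim. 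What your approach buys is a uniform, orientation-insensitive derivation in which the constant $-1/2$ is forced by linear algebra; what it costs is reliance on more auxiliary facts and a departure from the paper's stated goal of a purely synthetic argument.
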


\begin{proof}
Perform a homothetic transformation of dilation factor $-1/2$ with
respect to the centroid $G$ of $\tr ABC.$ By Lemma
\ref{CommonCentroid} the point $G$ is also the centroid of $\tr
A_1B_1C_1.$ Then $A_1B_1$ maps to $A_2B_2$ and so the midpoint
$C_2$ of $A_1B_1$ maps to the midpoint $C_3$ of $A_2B_2.$ Also,
the vertex $C$ maps to the midpoint $M_3$ of $AB$ because $G$ is
the centroid
of $\tr ABC$ (see Figure \ref{FigHomothetic}.) 
From here we can conclude that $C_3G : GC_2 = 1 : 2$ and $M_3G :
GC = 1 : 2$ which transforms into $C_3G : C_3C_2 = 1 : 3$ and
$M_3G : M_3C = 1 : 3.$ As $C^{*}$ is the centroid of $\tr
A_2B_2C,$ we can see that $C_3C^{*} : C_3C = C_3 : C_3C_2 = C^{*}G
: CC_2 = 1 : 3$ and $C^{*}G$ is parallel to $CC_2.$ Similarly,
$G_3^{'}$ is the centroid of $\tr ABC_1^{'},$ so $M_3G_3^{'} :
M_3C_1^{'} = M_3G : M_3C = G_3^{'}G : C_1^{'}C = 1 : 3$ and
$G_3^{'}G$ is parallel to $C_1^{'}C.$ By Lemma \ref{Midpoint}
$C_2$ is the midpoint of $CC_1^{'}$ which means that both $GC^{*}$
and $GG_3^{'}$ are parallel to the same line $CC_2.$ Therefore $G$
belongs to $C^{*}G_3^{'}.$ Moreover, $C^{*}G = \frac{1}{3} CC_2=
\frac{1}{6} CC_1^{'}$ and $G_3^{'}G = \frac{1}{3} CC_1^{'}.$ Hence
$C^{*}G : GG_3^{'} = 1 : 2,$ so the point $C^{*}$ is the image of
the point $G_3^{'}$ under the homothetic transformation of factor
$-1/2$ with respect to $G.$
\end{proof}

After establishing the previous result, we are ready to claim
statements 5, 6 and 7 from Theorem \ref{Main}.

\begin{cor}\label{LastTwoStatements}
In the setting of Theorem \ref{Main}, triangle $\tr
A^{*}B^{*}C^{*}$ is homothetic to the triangle $\tr
G^{'}_1G^{'}_2G^{'}_3$ with a homothetic center $G$ and a
coefficient of similarity $-1/2.$ Similarly, triangle $\tr
A^{**}B^{**}C^{**}$ is homothetic to the triangle $\tr G_1G_2G_3$
with a homothetic center $G$ and a coefficient of similarity
$-1/2.$ Moreover, the area of $\tr ABC$ equals four times the
algebraic sum of the areas of $\tr A^{*}B^{*}C^{*}$ and $\tr
A^{**}B^{**}C^{**}.$
\end{cor}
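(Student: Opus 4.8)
The plan is to derive statements 5, 6 and 7 formally from Lemma \ref{Homothetic}, the cyclic symmetry of the construction, and part 3 of Napoleon's Theorem (Theorem \ref{Napoleon}); Lemma \ref{Homothetic} already carries the only genuine geometric content — via Lemma \ref{Midpoint} it exhibits one homothety, with center $G$ and ratio $-1/2$, linking a centroid of the overlapping data to a centroid of the non-overlapping data — so what remains is bookkeeping.

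Let $h$ denote the homothety with center $G$ and ratio $-1/2$. Its linear part is $-\tfrac12\,\mathrm{Id}$, with positive determinant $1/4$, so $h$ preserves orientation and scales every signed area by the factor $1/4$; this is why statement 7 will follow once statements 5 and 6 are in hand. Lemma \ref{Homothetic} says $h(G_3') = C^{*}$. The whole construction is invariant under the cyclic relabeling $A \to B \to C \to A$, which permutes $A_1',B_1',C_1'$ cyclically, permutes $A_2,B_2,C_2$ cyclically, carries $\tr ABC_1'$ to $\tr BCA_1'$ and $\tr A_2B_2C$ to $\tr B_2C_2A$, and fixes $G$. Applying Lemma \ref{Homothetic} to the two relabeled configurations therefore yields $h(G_1') = A^{*}$ and $h(G_2') = B^{*}$, so $h$ maps $\tr G_1'G_2'G_3'$ onto $\tr A^{*}B^{*}C^{*}$; this is statement 5. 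Interchanging the roles of the overlapping and non-overlapping configurations throughout (that is, applying Lemma \ref{Homothetic} and its two cyclic instances with every prime toggled) gives in the same way $h(G_1) = A^{**}$, $h(G_2) = B^{**}$, $h(G_3) = C^{**}$, so $h$ maps $\tr G_1G_2G_3$ onto $\tr A^{**}B^{**}C^{**}$; this is statement 6.

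For statement 7 I would combine the two similarities with Napoleon's Theorem. Since $h$ scales signed area by $1/4$, statement 5 gives $Area(\tr A^{*}B^{*}C^{*}) = \tfrac14\, Area(\tr G_1'G_2'G_3')$ and statement 6 gives $Area(\tr A^{**}B^{**}C^{**}) = \tfrac14\, Area(\tr G_1G_2G_3)$, with signed areas taken consistently with the convention under which part 3 of Napoleon's Theorem reads $Area(\tr ABC) = Area(\tr G_1G_2G_3) + Area(\tr G_1'G_2'G_3')$. Adding the two identities and multiplying by $4$ gives $Area(\tr ABC) = 4\big(Area(\tr A^{*}B^{*}C^{*}) + Area(\tr A^{**}B^{**}C^{**})\big)$, which is the desired algebraic identity for the areas.

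I expect the only delicate point to be the bookkeeping in the cyclic-symmetry step: one has to verify that the relabeling matches up the centroids correctly ($G_3' \mapsto G_1'$ with $C^{*} \mapsto A^{*}$, and $G_3' \mapsto G_2'$ with $C^{*} \mapsto B^{*}$), so that the three instances of Lemma \ref{Homothetic} assemble into a single homothety of whole triangles rather than three unrelated point identities; one should also confirm that the orientation and sign conventions used in the area count agree between the primed and unprimed halves and with Napoleon's Theorem. Because the configuration is manifestly symmetric under cyclic relabeling and $h$ rescales all signed areas by the same positive factor $1/4$, neither point constitutes a real obstacle.
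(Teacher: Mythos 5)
Your proposal is correct and follows essentially the same route as the paper: three instances of Lemma \ref{Homothetic} (obtained by cyclic relabeling) assemble the homothety $h$ with center $G$ and ratio $-1/2$ for statement 5, the prime-toggled version gives statement 6, and the $1/4$ area scaling combined with part 3 of Napoleon's Theorem gives statement 7. Your added remarks on orientation and signed-area conventions are a slight refinement of the paper's terser ``due to the homothety'' step, but the argument is the same.
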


\begin{proof}
Applying Lemma \ref{Homothetic} first to the pair of centroids
$C^{*}$ and $G_3^{'}$ of the equilateral triangles $\tr A_2B_2C$
and $\tr ABC_1^{'},$ then to the centroids $A^{*}$ and $G_1^{'}$
of the equilateral triangles $\tr AB_2C_2$ and $\tr A_1^{'}BC,$
and finally to the centroids $B^{*}$ and $G_2^{'}$ of the
equilateral triangles $\tr A_2BC_2$ and $\tr AB_1^{'}C,$ we
conclude that triangle $\tr A^{*}B^{*}C^{*}$ is homothetic to the
triangle $\tr G^{'}_1G^{'}_2G^{'}_3$ with respect to $G$ and a
dilation coefficient $-1/2.$ Analogously, the same is true for the
equilateral triangles $\tr A^{**}B^{**}C^{**}$ and $\tr
G_1G_2G_3.$ Finally, due to the homothety, the area of $\tr
A^{*}B^{*}C^{*}$ is $1/4$ of the area of $\tr
G^{'}_1G^{'}_2G^{'}_3$ and the area of $\tr A^{**}B^{**}C^{**}$ is
$1/4$ of the area of $\tr G_1G_2G_3.$ Since by Napoleon's Theorem
the area of $\tr ABC$ equals the algebraic sum of the areas of
$\tr G_1G_2G_3$ and $\tr G^{'}_1G^{'}_2G^{'}_3,$ we conclude that
the area of $\tr ABC$ equals four times the algebraic sum of the
areas of $\tr A^{*}B^{*}C^{*}$ and $\tr A^{**}B^{**}C^{**}.$ This
completes the proof of the corollary.
\end{proof}

\end{document}